\newcommand{\bR}{\mathbb{R}}
\newcommand{\bC}{\mathbb{C}}
\newcommand{\cS}{\mathcal{S}}
\newcommand{\cF}{\mathcal{F}}
\newcommand{\cC}{\mathcal{C}}
\newcommand{\cD}{\mathcal{D}}
\newcommand{\1}{\mathbf{1}}
\DeclarePairedDelimiter\abs{\lvert}{\rvert}
\DeclarePairedDelimiter\norm{\lVert}{\rVert}
\DeclarePairedDelimiter\Set\{\}
\newcommand\dist{\operatorname{dist}}
\newcommand\supp{\operatorname{supp}}
\newcommand{\cz}{Calder\'{o}n--Zygmund }
\newcommand{\dif}{\mathrm{d}}
\newcommand{\Dini}{\mathrm{Dini}}
\numberwithin{equation}{section}
\newtheorem{theorem}[equation]{Theorem}
\newtheorem{cor}[equation]{Corollary}
\newtheorem{lemma}[equation]{Lemma}
\theoremstyle{definition}
\newtheorem{defin}[equation]{Definition}
\begin{document}

\title[Domination, non-homogeneous]{Sparse domination on non-homogeneous spaces with an application to $A_p$ weights}
\author[A. Volberg]{Alexander Volberg}
\thanks{AV is partially supported by the NSF grant DMS-1600065 and by the Hausdorff Institute for Mathematics, Bonn, Germany}
\author[P. Zorin-Kranich]{Pavel Zorin-Kranich}

\subjclass[2010]{42B20}
%\keywords{Calder\'on--Zygmund operators, non-doubling measures, sparse operators}

\begin{abstract}
We extend Lerner's recent approach to sparse domination of \cz operators to upper doubling (but not necessarily doubling), geometrically doubling metric measure spaces.
Our domination theorem is different from the one obtained recently by Conde-Alonso and Parcet and yields a weighted estimate with the sharp power $\max(1,1/(p-1))$ of the $A_{p}$ characteristic of the weight.
\end{abstract}

\maketitle

\section{Introduction}
\label{two}

We are interested in weighted estimates for an operator $T$ with a kernel of \cz type (see below) acting on functions on a metric space $X$ with a non-doubling measure $\mu$.
We always assume that the operator $T$ is bounded on $L^2(X, \mu)$; such operators are called (non-homogeneous) \cz operators.

In the case $X=\bR^{d}$ with the Lebesgue measure, the linear in $A_2$ characteristic estimate for \cz operators, formerly called the ``$A_2$ conjecture'', took some efforts of a large group of mathematicians to settle.
For the Ahlfors--Beurling transform and the Hilbert transform this has been done in \cite{MR1894362} and \cite{MR2354322}, respectively, using a Bellman function approach.
This linear $A_{2}$ estimate for the Ahlfors--Beurling transform had an important application to the theory of quasiregular maps \cite{MR1815249}.
After a number of intermediate results, of which we would like to mention the beautiful papers by Cruz-Uribe, Martell, and P\'erez \cite{MR2628851} and by Lacey, Petermichl, and Reguera \cite{MR2657437}, where the $A_{2}$ conjecture has been proved for dyadic singular operators, the $A_2$ conjecture has been finally proved in full generality by Hyt\"onen \cite{MR2912709}.
Shortly thereafter, a proof based on the methods of \cite{MR2407233} has been obtained in \cite{MR3176607}, and an extension to doubling measure spaces has been obtained in \cite{MR3188553}.
This required the construction of ``random dyadic lattices'' of Christ cubes on doubling metric measure spaces.

The above mentioned proofs are based on decompositions of \cz operators into dyadic singular operators (martingale shifts).
Martingale shifts with respect to doubling measures are ``good'' in the sense that their weighted norms grow linearly in the $A_2$ characteristic of the weight.
In the non-homogeneous situation we hit a very serious difficulty on this path, described in the articles by L\'opez-S\'anchez, Martell, and Parcet \cite{MR3269175} and Thiele, Treil, and Volberg \cite{MR3406523}: for non-doubling measures $\mu$ there is a huge class of martingale shifts that are not good.
So, if one wants to proceed by this path, such ``dangerous'' martingale shifts should be completely avoided in the decomposition of the \cz operator.

A class of good martingale shifts, called ``$L^1(\mu)$-normalized shifts'', has been identified in \cite{MR3539383}.
In \cite{MR3406523} it has been shown that any martingale transform is a good shift; a short proof of this result has been found by Lacey \cite{arXiv:1501.05818}.
In \cite[Theorem 2.11]{MR3269175} an interesting characterization of weak type $(1,1)$ for martingale shifts has been given.
Shifts that have weak type $(1,1)$ are good, and the $L^1(\mu)$-normalized martingale shifts from \cite{MR3539383} form a subclass of the good shifts found in \cite{MR3269175}.

A more recent approach to sharp weighted estimates for \cz operators consists in estimating (rather than representing) them by \emph{sparse operators}
\begin{equation}
\label{dom0}
A_{\cS}f = \sum_{Q\in \cS} \langle \abs{f}\rangle_Q \1_Q,
\qquad
\langle f\rangle_Q := \frac{1}{\mu(Q)} \int_I f \dif\mu,
\end{equation}
where $\cS$ is a \emph{sparse} family of cubes (that is, for every $Q\in\cS$ there exists a subset $E(Q)$ such that $\mu(E(Q)) \geq \frac12 \mu(Q)$ and the sets $E(Q)$ are pairwise disjoint).
The pervasive importance of sparse operators has been realized by Lerner, who proved in \cite{MR3127380} that sparse operators control \cz operators ``on average'' in the sense that $\norm{Tf}_{X} \lesssim_{T} \sup_{\cS} \norm{A_{\cS}f}_{X}$ holds for every Banach function space $X$.
Pointwise estimates for $\abs{Tf}$ by convex combinations of sparse operators have been later obtained in the works of Lerner and Nazarov \cite{arXiv:1508.05639}, Conde-Alonso and Rey \cite{MR3521084}, Lacey \cite{arXiv:1501.05818}, and Lerner \cite{MR3484688}.
The ``sparse operator approach'' has also been successfully applied to other classes of operators, not necessarily given in integral form, see \cite{MR3531367}.
These results, combined with the short proof of weighted estimates for sparse operators by Moen \cite{MR3000426}, provide the most concise proofs of the $A_{2}$ conjecture.

Lerner's local mean oscillation approach \cite{MR2721744} to quantitative weighted estimates has been recently extended to non-doubling measures by Conde-Alonso and Parcet \cite{arXiv:1604.03711}.
Their result is that $Tf$ is dominated by the composition of a certain sparse operator with a certain maximal operator.
This gives a weighted estimate for the operator norm of $T$, but for doubling measures $\mu$ it grows as $[w]_{A_2}^{2}$ on $L^{2}(w \dif\mu)$, and hence does not recover the sharp weighted bound in this classical setting.

In this article we extend Lerner's version \cite{MR3484688} of Lacey's sparse domination algorithm \cite{arXiv:1501.05818} to non-doubling measures.
The stopping time construction in Lerner's article works well as long as the starting cube is doubling in the sense of \eqref{eqdob22}.
However, the stopping cubes need not be doubling.
We have tried to deal with this difficulty using Tolsa's \cz decomposition with respect to a non-homogeneous measure (that has been found for the purpose of proving that a non-homogeneous \cz operator $T$ is weakly bounded on $L^1(\mu)$, a result previously proved without \cz decomposition in \cite{MR1626935}).
But Tolsa's \cz decomposition uses cubes with bounded overlap, unlike the classical one, which uses disjoint cubes.
This seems to lead to an uncontrollable growth of overlap when the decomposition is recursively iterated.

We avoid these problems by working with David--Mattila cells that substitute the dyadic grid and have convenient properties (stated in Lemma~\ref{DM}).

\begin{theorem}
\label{Lnh}
Let $(X,d,\mu)$ be an upper doubling, geometrically doubling metric measure space and $\alpha\geq 200$.
Then for every $L^{2}$ bounded \cz operator $T$ on $X$, every bounded set $X'\subset X$, and every integrable function $f$ supported on $X'$ we can find sparse families $\cF_{n}$, $n=0,1,\dots$, of David--Mattila cells such that the estimate
\[
T^{\sharp}f
\lesssim_{T,\alpha}
\sum_{n=0}^{\infty} 100^{-n} \sum_{Q\in \cF_{n}} \frac{\int_{30B(Q)} \abs{f} \dif\mu}{\mu(\alpha B(Q))} \cdot \1_Q
\]
holds pointwise $\mu$-almost everywhere on $X'$.
\end{theorem}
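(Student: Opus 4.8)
The plan is to run the recursive stopping-time construction of Lerner \cite{MR3484688} and Lacey \cite{arXiv:1501.05818}, organised around the David--Mattila lattice instead of a dyadic grid, so that the possible failure of the stopping cells to be doubling is absorbed into the index $n$ together with the geometric weight $100^{-n}$; throughout one uses, as in the homogeneous argument, that a non-homogeneous \cz operator bounded on $L^{2}$ is also of weak type $(1,1)$ \cite{MR1626935} and the upper-doubling kernel bounds. First I would reduce to one starting cell: since $f$ is supported on the bounded set $X'$, for a sufficiently coarse generation of the lattice there are only boundedly many cells $R_{0}$ meeting $X'$, they may be taken doubling with $\supp f\subset 30B(R_{0})$ (Lemma~\ref{DM}), their union covers $X'$, and being pairwise disjoint they already form a sparse family; so it suffices to produce the $\cF_{n}$ for $\1_{R_{0}}T^{\sharp}f$ for each such $R_{0}$ and then take unions.

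The core is a one-step estimate. Given a doubling David--Mattila cell $R$, set $a_{R}:=\frac{1}{\mu(\alpha B(R))}\int_{30B(R)}\abs{f}\dif\mu$ and take the maximal David--Mattila subcells $R'\subsetneq R$ for which either $a_{R'}>\Lambda a_{R}$ or $\operatorname*{ess\,sup}_{\xi\in R'}\abs{T(f\1_{X\setminus 30B(R')})(\xi)}>\Lambda a_{R}$, with $\Lambda=\Lambda(T,\alpha)$ large; call them $R_{j}$. Maximality of the $R_{j}$ gives $a_{R_{j}}\lesssim a_{R}$; the weak $(1,1)$ bound for $T$ together with Kolmogorov's inequality and the kernel estimate gives $\sum_{j}\mu(R_{j})\le\tfrac12\mu(R)$ (here it is essential that $R$ is doubling, so that $\mu(\alpha B(R))\lesssim\mu(R)$); and off $\bigcup_{j}R_{j}$ the part of $T^{\sharp}f$ coming from balls not much larger than $B(R)$ is $\lesssim a_{R}$, the part from larger balls being comparable, up to an error $\lesssim a_{R}$, to the analogous quantity for the cell from which $R$ was split off (for $R=R_{0}$ it vanishes, since $\supp f\subset 30B(R_{0})$). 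The small-boundary property in Lemma~\ref{DM} is what I would use here to handle points of $R$ near $\partial R$, where a general dilate of a cell can otherwise misbehave; this is exactly the step that collapses under Tolsa's overlapping \cz cubes, as explained in the introduction.

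Next I would iterate. A stopping cell $R_{j}$ need not be doubling, so the one-step estimate cannot be reapplied at $R_{j}$ directly; I would instead pass to the maximal \emph{doubling} subcells of $R_{j}$ and apply the one-step estimate to each of those. The cells lying strictly between $R_{j}$ and its maximal doubling subcells are non-doubling, and the geometric-decay statement of Lemma~\ref{DM} --- the measures $\mu(\alpha B(\cdot))$ contract by a fixed factor with each such non-doubling generation --- is what lets the contribution of a cell produced $n$ generations into such a descent be absorbed with the weight $100^{-n}$. Placing into $\cF_{n}$ the cells that arise at depth $n$ of these descents (so that $\cF_{0}$ carries the usual doubling recursion), summing the one-step estimates over the whole recursion tree, telescoping the tail terms and regrouping by $n$, yields the asserted pointwise bound; sparseness of each $\cF_{n}$ is inherited from disjointness of the stopping cells at each level of the recursion together with the mass bound $\sum_{j}\mu(R_{j})\le\tfrac12\mu(R)$.

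The hard part will be this non-doubling bookkeeping: one must verify that the jump from a non-doubling stopping cell to its doubling descendants costs no more than a factor that becomes summable once the weight $100^{-n}$ is inserted, while at the same time absorbing the discrepancy between truncating $T$ at the fixed ball $30B(R)$ and at an arbitrary ball around a point of $R$ --- the ``grand'' nature of $T^{\sharp}$ --- into the same terms, and that the $a$-values along the descents are organised so that the decay is genuinely in the right direction. Both points are controlled by the quantitative small-boundary and geometric-decay statements of Lemma~\ref{DM}, which is precisely why David--Mattila cells, rather than Christ cubes or Tolsa's \cz cubes, are the right substitute for the dyadic grid here.
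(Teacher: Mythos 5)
Your overall architecture is the paper's own: a Lerner--Lacey stopping-time recursion organised on David--Mattila cells, launched from doubling cells, with non-doubling stopping cells subdivided until doubling descendants are reached and the intermediate generations placed into $\cF_{n}$ with weight $100^{-n}$. However, the mechanism you offer for the decisive $100^{-n}$ gain is wrong as stated. You attribute it to ``the measures $\mu(\alpha B(\cdot))$ contracting by a fixed factor with each non-doubling generation''. Such a contraction can indeed be extracted from \eqref{eqdob23}, but it cannot by itself produce a factor $100^{-n}$ in front of $A(f,Q)=\mu(\alpha B(Q))^{-1}\int_{30B(Q)}\abs{f}\,\dif\mu$: the shrinking quantity $\mu(\alpha B(Q))$ sits in the \emph{denominator} of the averages you need to keep small, so raw measure decay makes those averages larger, not smaller. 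What must decay is the density $\Theta(Q)=\mu(\alpha B(Q))/\lambda(z_{Q},\alpha r(Q))$. Indeed, the error incurred in passing from a cell to its child is, by the kernel size bound \eqref{eq:czk-size}, of order $\lambda(x,r(Q))^{-1}\int_{30B(\hat Q)}\abs{f}\,\dif\mu\lesssim\Theta(\hat Q)A(f,\hat Q)$ (see \eqref{eq:consecutive-scales-radius-av} and \eqref{eq:2scales}), and the geometric decay $\Theta(Q_{n})\lesssim 100^{-n}$ along chains of non-doubling cells is exactly Lemma~\ref{lemcad22}; it combines \eqref{eqdob23} with the doubling of the dominating function $\lambda$ across the $A_{0}$-fold scale jumps and requires the parameter relation $C_{0}^{l_{0}/2}>C_{\lambda}^{\lceil\log_{2}A_{0}\rceil}$. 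Without bringing $\lambda$ (equivalently $\Theta$) into the bookkeeping, the telescoping along a non-doubling descent does not close, and this is precisely the step you flag as ``the hard part'' without supplying the right quantity.

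Two further points. Your reliance on the small-boundary property of Lemma~\ref{lemcubs} to ``handle points of $R$ near $\partial R$'' is off target: the paper never uses the small boundaries (this is stated explicitly in Section~\ref{dim1}); boundary behaviour is irrelevant because all truncations are taken at the fixed balls $30B(P)$ attached to cells $P\ni x$, and the obstruction with Tolsa's \cz cubes is the uncontrolled growth of overlap under iteration, not boundary effects. Finally, the discrepancy between $T^{\sharp}f$ and the cell-adapted truncations is pointwise of size $M_{\lambda}f$ (see \eqref{eq:compare-truncations} and \eqref{eq:N-tsharp}); it is not automatically ``absorbed into the same terms'' of the stopping argument, but needs its own, parallel, sparse domination, which the paper obtains by running the recursion for $\tilde N_{Q_{0}}f(x)=\sup_{x\in Q\in\cD(Q_{0})}\Theta(Q)A(f,Q)$. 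You should make this step explicit rather than fold it silently into the selection of stopping cells.
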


From this result one can easily deduce the following bound for the maximally truncated operator $T^{\sharp}$ on the weighted space $L^p(w \dif\mu)$, see Section~\ref{cons}.
\begin{cor}
\label{A2}
Let $(X,d,\mu)$ be an upper doubling, geometrically doubling metric measure space and let  $T$ be a \cz operator on $X$.
Then for every $1<p<\infty$ we have
\begin{equation}
\label{eq:dep-on-weight}
\norm{T^{\sharp}}_{L^{p}(w)\to L^{p}(w)}
\lesssim_{\alpha}
\sup_{Q\in\cD} \frac{\sigma(200 B(Q)) w(Q) \sigma(Q)^{(p-2)_{+}} w(Q)^{(p'-2)_{+}}}{\mu(\alpha B(Q)) \mu(Q)^{p^{*}-1}},
\end{equation}
where $\sigma=w^{-1/(p-1)}$ is the dual weight, $p^{*}=\max(p,p')$, and the supremum is taken over David--Mattila cells.
\end{cor}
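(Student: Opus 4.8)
The plan is to derive Corollary~\ref{A2} from the pointwise sparse domination in Theorem~\ref{Lnh} by the dual/testing argument that in the homogeneous setting gives the sharp power $[w]_{A_{p}}^{\max(1,1/(p-1))}$ of the $A_{p}$ characteristic (as in the works of Moen, Hyt\"onen--P\'erez, Lacey), while keeping track of the ball/cell normalizations special to the non-homogeneous situation. First I would reduce to a quantitative bound for a single generalized sparse operator: by a routine density argument we may take $f$ bounded with bounded support, apply Theorem~\ref{Lnh} to get sparse families $\cF_{n}$ with $T^{\sharp}f\lesssim_{T,\alpha}\sum_{n\geq 0}100^{-n}A_{\cF_{n}}f$ pointwise, where $A_{\cF}h:=\sum_{Q\in\cF}\frac{\int_{30B(Q)}\abs{h}\dif\mu}{\mu(\alpha B(Q))}\1_{Q}$, then take $L^{p}(w)$ norms and sum the geometric series in $n$. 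It thus suffices to prove
\[
\norm{A_{\cF}f}_{L^{p}(w)}\lesssim_{\alpha}\Bigl(\sup_{Q\in\cD}C_{p}(Q)\Bigr)\norm{f}_{L^{p}(w)}
\]
uniformly over $\mu$-sparse families $\cF$ of David--Mattila cells, $C_{p}(Q)$ being the expression under the supremum in \eqref{eq:dep-on-weight}; Corollary~\ref{A2} then follows by letting the truncation parameter of $T^{\sharp}$ and the support of $f$ exhaust $X$ and using monotone convergence.

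Put $\sigma=w^{-1/(p-1)}$, so that $w^{1-p'}=\sigma$, $\sigma^{1-p}=w$, $w^{-p'/p}=\sigma$ and $\sigma^{-p/p'}=w$, and $\sigma$ is the weight dual to $w$ with respect to $\mu$. By duality $\norm{A_{\cF}f}_{L^{p}(w)}=\sup\int(A_{\cF}f)\,g\dif\mu$ over $0\leq g$ with $\norm{g}_{L^{p'}(\sigma)}\leq 1$, and the right-hand side equals $\sum_{Q\in\cF}\mu(\alpha B(Q))^{-1}\bigl(\int_{30B(Q)}\abs{f}\dif\mu\bigr)\bigl(\int_{Q}g\dif\mu\bigr)$. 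H\"older's inequality on the ball $30B(Q)$ and on the cell $Q$ gives
\[
\int_{30B(Q)}\abs{f}\dif\mu\leq\Bigl(\int_{30B(Q)}\abs{f}^{p}w\dif\mu\Bigr)^{1/p}\sigma(200B(Q))^{1/p'},\qquad\int_{Q}g\dif\mu\leq\Bigl(\int_{Q}g^{p'}\sigma\dif\mu\Bigr)^{1/p'}w(Q)^{1/p},
\]
so that it remains to bound $\sum_{Q}\frac{\sigma(200B(Q))^{1/p'}w(Q)^{1/p}}{\mu(\alpha B(Q))}\bigl(\int_{30B(Q)}\abs{f}^{p}w\bigr)^{1/p}\bigl(\int_{Q}g^{p'}\sigma\bigr)^{1/p'}$ by $(\sup_{Q}C_{p}(Q))\norm{f}_{L^{p}(w)}\norm{g}_{L^{p'}(\sigma)}$.

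Next I would apply H\"older's inequality to the sum over $Q$ with exponents $p,p'$, distributing the coefficient so that the two resulting sums become weighted Carleson sums with respect to the measures $\sigma\dif\mu$ and $w\dif\mu$. The required Carleson packing conditions are provided by the pairwise disjointness of the sets $E(Q)\subseteq Q$ together with the crude estimates $\sigma(E(Q))\leq\sigma(Q)$, $w(E(Q))\leq w(Q)$, $\mu(E(Q))\leq\mu(Q)\leq\mu(\alpha B(Q))$ and the H\"older inequality $\mu(E(Q))^{p}\leq w(E(Q))\,\sigma(E(Q))^{p-1}$ on $E(Q)$; the weighted Carleson embedding theorem then produces $\norm{f}_{L^{p}(w)}^{p}$ and $\norm{g}_{L^{p'}(\sigma)}^{p'}$, with the leftover geometric factors bounded by $\sup_{Q}C_{p}(Q)$. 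Keeping track of which power of $\mu(E(Q))$ is routed to which of the two sums is what produces the exponents $(p-2)_{+}$, $(p'-2)_{+}$ and $p^{*}=\max(p,p')$ in \eqref{eq:dep-on-weight}; since $A_{\cF}$ is not self-adjoint (the average of $f$ sits on $30B(Q)$ while that of $g$ sits on $Q$) this computation is run directly for every $1<p<\infty$ rather than for $p\geq 2$ followed by duality.

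The step I expect to be the main obstacle is exactly the one that is trivial in the classical setting: the quantities $\int_{30B(Q)}\abs{f}^{p}w\dif\mu$ and $\sigma(30B(Q))$ live on the dilated ball $30B(Q)$, not on the cell $Q$, and the family $\{30B(Q):Q\in\cF\}$ need not have the bounded overlap that cells of a sparse family enjoy, so the Carleson embedding on the $f$-side cannot be applied verbatim. One has to pass from ball-scale to cell-scale averages; I expect this to use the structural properties of David--Mattila cells recorded in Lemma~\ref{DM} together with the slack $\alpha\geq 200\geq 3\cdot 30$, which makes $\alpha B(Q)$ large enough relative to $30B(Q)$ that a Hardy--Littlewood-type maximal operator of the form $\sup_{B\ni x}\mu(\kappa B)^{-1}\int_{B}(\cdot)\dif\mu$ — bounded on $L^{r}(\mu)$ for $r>1$ with no doubling hypothesis — dominates $\mu(\alpha B(Q))^{-1}\int_{30B(Q)}(\cdot)\dif\mu$ pointwise on $E(Q)$. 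Carrying this out so that the final constant is $\sup_{Q}C_{p}(Q)$ itself, rather than a power of it, is the technical heart of the argument.
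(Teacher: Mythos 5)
Your overall frame (reduce via Theorem~\ref{Lnh} to a uniform bound for one sparse operator, sum the series in $n$, dualize) matches the paper, but the core of your argument diverges and, as set up, does not close. After dualizing with $g\in L^{p'}(\sigma)$ you apply H\"older \emph{locally} on each $30B(Q)$ and each $Q$, producing the localized masses $\bigl(\int_{30B(Q)}\abs{f}^{p}w\,\dif\mu\bigr)^{1/p}$ and $\bigl(\int_{Q}g^{p'}\sigma\,\dif\mu\bigr)^{1/p'}$, and then hope to control the resulting sums by a H\"older-plus-Carleson-embedding argument. This is the gap: these local quantities are raw $L^{1}$-masses of $\abs{f}^{p}w$ and $g^{p'}\sigma$, not $p$-th (or $p'$-th) powers of averages, so the weighted Carleson embedding theorem and maximal-function arguments (which require an exponent strictly above the endpoint) are unavailable. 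Concretely, already for the $g$-side, $\mu$-sparsity of $\cF$ gives no packing condition with respect to $\sigma\dif\mu$: for a nested chain $Q_{0}\supset Q_{1}\supset\dotsb$ with $\mu(Q_{k+1})=\tfrac12\mu(Q_{k})$ (a perfectly sparse family) and $g=\1_{Q_{N}}$ one has $\sum_{k\le N}\int_{Q_{k}}g^{p'}\sigma = (N+1)\,\sigma(Q_{N})$, which is not $O(\norm{g}_{L^{p'}(\sigma)}^{p'})$; the disjointness of the $E(Q)$'s and the inequalities $\sigma(E(Q))\le\sigma(Q)$, $\mu(E(Q))^{p}\le w(E(Q))\sigma(E(Q))^{p-1}$ do not repair this, because they never convert $\mu$-sparsity into $\sigma\dif\mu$- or $w\dif\mu$-packing. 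Your proposed fix on the $f$-side has the same endpoint problem: a maximal operator of the form $\sup_{B\ni x}\mu(\kappa B)^{-1}\int_{B}(\cdot)\,\dif\mu$ would here be applied to $\abs{f}^{p}w\in L^{1}(\mu)$, where only a weak $(1,1)$ bound holds, so summing over the disjoint sets $E(Q)$ does not yield $\norm{f}_{L^{p}(w)}^{p}$.

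The paper's proof avoids this by never taking local H\"older: it tests the bilinear form on $f\sigma$ and $gw$ and rewrites each term as $\frac{\sigma(200B(Q))\,w(Q)}{\mu(\alpha B(Q))}\bigl(\frac{1}{\sigma(200B(Q))}\int_{30B(Q)}\abs{f\sigma}\,\dif\mu\bigr)\bigl(\frac{1}{w(Q)}\int_{Q}\abs{gw}\,\dif\mu\bigr)$, i.e.\ as \emph{dual-weight averages}. One then pulls out $\sup_{Q}\frac{\sigma(200B(Q))w(Q)}{\mu(\alpha B(Q))\sigma(E(Q))^{1/p}w(E(Q))^{1/p'}}$, applies H\"older in $Q$ with weights $\sigma(E(Q))$, $w(E(Q))$, and bounds the two sums by $\norm{M_{\sigma\dif\mu}f}_{L^{p}(\sigma)}^{p}$ and $\norm{M^{\cD}_{w\dif\mu}g}_{L^{p'}(w)}^{p'}$, using disjointness of the $E(Q)$ and the boundedness of these maximal operators with respect to the (arbitrary) measures $\sigma\dif\mu$ and $w\dif\mu$ (Vitali covering for the ball maximal function, Doob for the martingale one). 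In particular, the $30B(Q)$-versus-cell mismatch that you single out as the technical heart is in fact dispatched in one line: normalizing the $f$-average by $\sigma(200B(Q))$ rather than $\sigma(30B(Q))$ makes the relevant maximal operator weak $(1,1)$ for any measure via the $5r$-covering lemma ($5\cdot 30\le 200$), which is exactly why $200B(Q)$ appears in the statement. The exponents $(p-2)_{+}$, $(p'-2)_{+}$, $p^{*}$ then come out of the elementary estimate $\mu(Q)\lesssim\mu(E(Q))\le w(E(Q))^{1/p}\sigma(E(Q))^{1/p'}$ raised to the power $p^{*}-1$, not from any routing of coefficients through a Carleson embedding. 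To fix your argument, replace the local H\"older step by the substitution $f\mapsto f\sigma$, $g\mapsto gw$ (equivalently, interpret $\int_{30B(Q)}\abs{f}\,\dif\mu$ as $\sigma(200B(Q))$ times a $\sigma\dif\mu$-average of $f/\sigma$) and proceed as above.
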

For doubling measures $\mu$ the right-hand side of \eqref{eq:dep-on-weight} is comparable to the usual sharp power $[w]_{A_{p}}^{\max(1,1/(p-1))}$ of the $A_{p}$ characteristic of the weight.
On the other hand, it is not clear whether for general measures our estimate is stronger than the one in \cite{arXiv:1604.03711}.
Also, it is up for debate what the most appropriate definition of the $A_{p}$ constants for weights with respect to non-doubling measures should be.
Although David--Mattila cells seem to have the same geometric structure as Christ's cubes in spaces of homogeneous type, which have been characterized in \cite{MR3113086}, a definition in terms of this rather large class of sets does not seem completely satisfactory.

\section{Notation and preliminaries}
\label{nota}

\subsection{Upper doubling measures and \cz operators}
\begin{defin}[{\cite{MR2675934}}]
A metric measure space $(X,d,\mu)$ is called \emph{upper doubling} if there exists a \emph{dominating function} $\lambda:X\times (0,\infty) \to (0,\infty)$ and a constant $C_{\lambda}>0$ such that for every $x\in X$ the function $r\mapsto \lambda(x,r)$ is nondecreasing and
\[
\mu(B(x,r)) \leq \lambda(x,r) \leq C_{\lambda} \lambda(x,r/2)
\]
holds for all $x\in X$ and $r\in (0,\infty)$.
\end{defin}
If $\lambda$ is a dominating function, then by \cite[Proposition 1.3]{MR2943664}
\[
\tilde\lambda(x,r) := \inf_{z\in X} \lambda(z,r+d(x,z))
\]
is also a dominating function (with the same constant $C_{\lambda}$) that is not larger than the original dominating function $\lambda$ and has the additional property that
\begin{equation}
\label{eq:df-loc}
\tilde\lambda(x,r) \leq C_{\lambda}\tilde\lambda(y,r)
\text{ for all }
x,y\in X
\text{ with }
d(x,y) \leq r.
\end{equation}
We will assume from now on that $\lambda$ satisfies \eqref{eq:df-loc}.

\begin{defin}
\label{ud}
A metric space $(X,d)$ is called \emph{geometrically doubling} (with doubling dimension $n$) if for every $R\geq r>0$ and every ball $B$ of radius $R$ the cardinality of an $r$-separated subset of $B$ can be at most $C(R/r)^{n}$.
\end{defin}

\begin{defin}
\label{czONud}
A \emph{\cz kernel} on a geometrically doubling, upper doubling metric measure space $(X,d,m)$ is a map $K:X\times X\setminus \Delta \to \bC$ such that
\begin{equation}
\label{eq:czk-size}
\abs{K(x,y)} \leq \frac{C_{K}}{\lambda(x,d(x,y))}
\end{equation}
for some $C_{K}\geq 0$ and all $x,y\in X$, $x\neq y$, and
\begin{equation}
\label{eq:czk-smoothness}
\abs{K(x,y)-K(x',y)}+\abs{K(y,x)-K(y,x')} \leq \omega\Big( \frac{d(x,x')}{d(x,y)} \Big) \frac{1}{\lambda(x,d(x,y))}
\end{equation}
for all $x,x',y\in X$ with $d(x,x')<\frac12 d(x,y)$, where $\omega : [0,\infty)\to [0,\infty)$ is a Dini modulus of continuity, that is, a monotonically increasing subadditive function with $\omega(x)=0 \iff x=0$ and $\norm{\omega}_{\Dini} := \sum_{j\geq 0} \omega(2^{-j}) < \infty$.

A \emph{\cz operator} with kernel $K$ is a linear operator $T$ such that for all bounded functions $f$ with bounded support the restriction of $Tf$ to the complement of the support of $f$ is given by
\[
Tf(x) = \int K(x,y) f(y) \dif\mu(y),
\quad
x \not\in \supp f.
\]
The $\epsilon$-truncation of $T$ is defined by
\[
T_{\epsilon}f(x) = \int_{d(x,y)>\epsilon} K(x,y) f(y) \dif\mu(y),
\]
note that this integral converges absolutely for every $f\in L^{p}(X,\mu)$, $1\leq p<\infty$, and every $x\in X$.
The maximally truncated operator $T^{\sharp}$ is defined by
\[
T^{\sharp}f(x) := \sup_{\epsilon>0} \abs{T_{\epsilon}f(x)}.
\]
\end{defin}
It has been proved in \cite{MR2957235} that $L^{2}$ boundedness of a \cz operator $T$ implies that both $T$ and the maximally truncated operator $T^{\sharp}$ have weak type $(1,1)$ (the results in that article are stated for power moduli of continuity $\omega(t)=ct^{\tau}$, $0<\tau\leq 1$, but the proofs only use the Dini condition).
An alternative proof appears in \cite{MR3023861}.
The former proof extends the proof for power bounded measures in \cite{MR1626935} and the latter the proof in \cite{MR1812821}.

\subsection{David--Mattila cells}
\label{DM}
Now we will consider the dyadic lattice of ``cubes'' with small boundaries of David--Mattila associated with $\mu$.
This lattice has been constructed in \cite[Theorem 3.2]{MR1768535} for non-doubling measures on $\bR^{n}$, and the proof works without alterations for general geometrically doubling metric spaces.
Its properties are summarized in the next lemma.

\begin{lemma}[David, Mattila]
\label{lemcubs}
Let $(X,d)$ be a geometrically doubling metric space with doubling dimension $n$ and let $\mu$ be a locally finite Borel measure on $X$.
Consider two constants $C_0>1$ and $A_0>5000\,C_0$ and denote $W=\supp\mu$.
Then there exists a sequence of partitions of $W$ into Borel subsets $Q$, $Q\in \cD_k$, with the following properties:
\begin{itemize}
\item For each integer $k\geq0$, $W$ is the disjoint union of the ``cubes'' $Q$, $Q\in\cD_k$, and
if $k<l$, $Q\in\cD_l$, and $R\in\cD_k$, then either $Q\cap R=\varnothing$ or else $Q\subset R$.

\item The general position of the cubes $Q$ can be described as follows.
For each $k\geq0$ and each cube $Q\in\cD_k$, there is a ball $B(Q)=B(z_Q,r(Q))$ such that
\[z_Q\in W, \qquad A_0^{-k}\leq r(Q)\leq C_0\,A_0^{-k},\]
\[W\cap B(Q)\subset Q\subset W\cap 28\,B(Q)=W \cap B(z_Q,28r(Q)),\]
and
\[\mbox{the balls\, $5B(Q)$, $Q\in\cD_k$, are disjoint.}\]

\item The cubes $Q\in\cD_k$ have small boundaries.
That is, for each $Q\in\cD_k$ and each
integer $l\geq0$, set
\[N_l^{ext}(Q)= \Set{x\in W\setminus Q:\,\dist(x,Q)< A_0^{-k-l}},\]
\[N_l^{int}(Q)= \Set{x\in Q:\,\dist(x,W\setminus Q)< A_0^{-k-l}},\]
and
\[N_l(Q)= N_l^{ext}(Q) \cup N_l^{int}(Q).\]
Then
\begin{equation}\label{eqsmb2}
\mu(N_l(Q))\leq (C^{-1}C_0^{-3n-1}A_0)^{-l}\,\mu(90B(Q)).
\end{equation}

\item Denote by $\cD_k^{db}$ the family of cubes $Q\in\cD_k$ for which
\begin{equation}\label{eqdob22}
\mu(100B(Q))\leq C_0\,\mu(B(Q)).
\end{equation}
For the cubes $Q\in\cD_k\setminus \cD_k^{db}$ we have that $r(Q)=A_0^{-k}$ and
\begin{equation}\label{eqdob23}
\mu(cB(Q))\leq C_0^{-1}\mu(100c B(Q))\quad
\end{equation}
for all $1\leq c\leq C_{0}$.
\end{itemize}
\end{lemma}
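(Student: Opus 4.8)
The statement is the David--Mattila dyadic cube construction, and I would prove it by reproducing their argument from \cite[Theorem~3.2]{MR1768535}, observing that the only feature of $\bR^{n}$ used there is the ball-counting bound encoded in Definition~\ref{ud}: a ball of radius $R$ contains at most $C(R/r)^{n}$ points that are $r$-separated. Everything else is soft metric reasoning together with local finiteness of $\mu$, so the proof transfers verbatim.

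\textbf{Step 1 (nested nets).} For each $k\geq0$ fix a maximal $A_{0}^{-k}$-separated subset $\{z_{i}^{k}\}_{i\in I_{k}}$ of $W=\supp\mu$. Maximality gives the covering $W\subset\bigcup_{i}B(z_{i}^{k},A_{0}^{-k})$, the separation makes the balls $B(z_{i}^{k},A_{0}^{-k}/2)$ pairwise disjoint, and geometric doubling bounds the number of centers inside any fixed ball, which keeps all later sums locally finite. Thread the nets into a tree by assigning to each $z_{i}^{k+1}$ a parent $z_{\hat\imath}^{k}$ with $d(z_{i}^{k+1},z_{\hat\imath}^{k})<A_{0}^{-k}$, which exists by maximality of the coarser net.

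\textbf{Step 2 (cubes with adaptively chosen radii --- the heart).} The reason the $Q$ are not simply Christ cubes is that the splitting of a generation-$k$ cube into its generation-$(k+1)$ children must avoid putting $\mu$-mass near the new boundaries \emph{at every finer scale at once}. For a center $z=z_{i}^{k}$ one examines the concentric shells cut out by a geometric progression of radii inside $[A_{0}^{-k},C_{0}A_{0}^{-k}]$ and, at each step, a pigeonhole argument over disjoint shells (whose total mass is at most $\mu(C_{0}B(z,A_{0}^{-k}))$) selects a boundary location where the shell mass decreases by a definite factor controlled by $A_{0}$ and $n$. Iterating this scale by scale and bookkeeping the accumulated neighborhoods $N_{l}^{ext}(Q),N_{l}^{int}(Q)$ produces the radius $r(Q)\in[A_{0}^{-k},C_{0}A_{0}^{-k}]$, the containments $W\cap B(Q)\subset Q\subset W\cap28B(Q)$, the disjointness of the balls $5B(Q)$, and the decay \eqref{eqsmb2}; the exponent base $C^{-1}C_{0}^{-3n-1}A_{0}$ is exactly the per-scale gain weighed against the loss from the ball-counting constant, which is why one needs $A_{0}$ large relative to $C_{0}$ and $n$. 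Running this boundary-avoidance argument simultaneously over all $l$ while keeping the rigid nesting of the $\cD_{k}$ (the children's soft boundaries are chosen inside the parent) is the genuine technical obstacle; everything else is routine.

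\textbf{Step 3 (doubling dichotomy and conclusion).} Declare $Q\in\cD_{k}^{db}$ precisely when the search in Step~2 finds a radius in $[A_{0}^{-k},C_{0}A_{0}^{-k}]$ at which \eqref{eqdob22} holds. If no such radius exists, then $\mu(B(z,\cdot))$ grows by a large factor across every dilation in that range; one is then forced to take the minimal radius $r(Q)=A_{0}^{-k}$, and feeding the failure of doubling at all intermediate radii back into the shell estimate upgrades it to the quantitative reverse inequality \eqref{eqdob23} for all $1\leq c\leq C_{0}$. The partition property, the nesting across generations, and the remaining bullet points are then read off directly from the construction, and since the doubling dimension $n$ and the ball-counting constant are the only quantities inherited from the ambient space, the whole argument is valid on any geometrically doubling metric space.
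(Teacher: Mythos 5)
Your proposal takes essentially the same route as the paper: the paper does not reprove this lemma but simply cites David--Mattila \cite[Theorem 3.2]{MR1768535} and observes, exactly as you do, that the construction uses only the ball-counting property of geometric doubling (together with local finiteness of $\mu$) and therefore transfers verbatim from $\bR^{n}$ to geometrically doubling metric spaces. One small caution should you write out the details: the separated family must be selected Vitali-style \emph{after} the adaptive radii are assigned (so that the balls $5B(Q)$ with $r(Q)\leq C_{0}A_{0}^{-k}$ are genuinely disjoint and the enlarged balls cover $\supp\mu$), rather than fixing a plain $A_{0}^{-k}$-net first as in your Step~1; this is the order followed in \cite{MR1768535} and in Section~\ref{dim1} of the paper.
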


We use the notation $\cD=\bigcup_{k\geq0}\cD_k$.
Observe that the families $\cD_k$ are only defined for $k\geq0$.
So the diameters of the cubes from $\cD$ are uniformly bounded from above.
For $Q\in\cD_{k}$ we call the cube $\hat Q\in \cD_{k-1}$ such that $\hat Q\supset Q$ the parent of $Q$.
We denote
$\cD^{db}=\bigcup_{k\geq0}\cD_k^{db}$.

\section{Sparse domination}
\subsection{Grand maximal truncation}
We put for a cell $Q\in \cD$ and $x\in Q$
\[
F(x, Q) := \int_{X\setminus 30B(Q)} K(x, y)f(y) \dif\mu(y)\,.
\]
We consider the (localized) grand maximal truncated operator
\[
N_{Q_0}f(x) := \1_{Q_{0}}(x) \sup_{x\in P,\,P \in \cD(Q_0)} \sup_{y\in P} \abs{F(y, P)},
\quad
Q_0\in \cD
\]
that has been introduced in \cite{MR3484688}.
We claim that the operator $N_{Q_{0}}$ has weak type $(1,1)$.
Indeed, let $x,x'\in Q\in \cD_{k}$.
Then
\begin{align*}
\abs{F(x,Q) - F(x',Q)}
&\leq
\int_{X\setminus 30 B(Q)} \abs{K(x,y)-K(x',y)} \abs{f(y)} \dif\mu(y)\\
&\lesssim
\sum_{j\geq 0} \int_{\dist(y,Q) \sim 2^{j} r(Q)} \frac{\omega(56r(Q)/(2^{j}r(Q)))}{\lambda(x,2^{j} r(Q))} \abs{f(y)} \dif\mu(y)\\
&\lesssim
\norm{\omega}_{\Dini} M_{\lambda} f(x),
\end{align*}
where
\[
M_{\lambda} f(x) := \sup_{R>0} \frac{1}{\lambda(x,R)} \int_{B(x,R)} \abs{f} \dif\mu.
\]

Moreover, for any $r\sim r(Q)$ we have
\begin{equation}
\label{eq:compare-truncations}
\begin{aligned}
\abs{T_{r}f(x) - F(x, Q)}
&\leq
\int_{30B(Q) \Delta B(x,r)} \abs{K(x,y)} \abs{f(y)} \dif\mu(y)\\
&\lesssim
C_{K} M_{\lambda} f(x).
\end{aligned}
\end{equation}
Therefore we have the pointwise inequality
\begin{equation}
\label{eq:N-tsharp}
\abs{N_{Q_{0}}f - T^{\sharp}f} \leq C(\norm{\omega}_{\Dini} + C_{K}) M_{\lambda} f
\quad\text{on}\quad Q_{0},
\end{equation}
valid for functions supported on $30B(Q_{0})$,
and this implies that $N_{Q_{0}}$ has weak type $(1,1)$ with a constant independent of $Q_{0}$.

\subsection{Consecutive scales}
For a David--Mattila cell $Q$ and a large number $\alpha\geq 200$, we denote 
\[
A(f, Q) :=\frac1{\mu(\alpha B(Q))} \int_{30 B(Q)}\abs{f} \dif\mu
\]
and
\[
\Theta(Q) := \frac{\mu(\alpha B(Q))}{\lambda(z_{Q},\alpha r(Q))}.
\]
Notice that if $A_{0}$ is chosen large enough and $Q \subset \hat Q$ are two nested  cells, then
\begin{equation}
\label{in}
30 B(Q)\subset 30 B(\hat Q),
\end{equation}
even though the centers of these two balls are different.

For every cell $Q\in\cD$ and every $x\in Q$ we have
\begin{equation}
\label{eq:consecutive-scales-radius-av}
\begin{aligned}
\int_{30 B(\hat{Q}) \setminus 30 B(Q)} \abs{K(x,y)} \abs{f(y)} \dif\mu(y)
&\leq
\frac{C_{K}}{\lambda(x,r(Q))} \int_{30 B(\hat{Q})} \abs{f} \dif\mu\\
&\lesssim
\frac{1}{\lambda(x,\alpha r(\hat Q))} \int_{30 B(\hat{Q})} \abs{f} \dif\mu\\
&\lesssim
\Theta(\hat Q) A(f,\hat Q),
\end{aligned}
\end{equation}
where $\hat Q\in\cD$ denotes the parent of $Q$, and in particular
\begin{equation}
\label{eq:2scales}
N_{\hat Q}(f\1_{30B(\hat Q)}) (x)
\leq
C \Theta(\hat Q) A(f,\hat Q) + N_{Q}(f\1_{30 B(Q)})(x).
\end{equation}

This is useful because the numbers $\Theta(Q)$ are bounded by $1$ and decay exponentially fast along nested sequences of non-doubling cubes.
\begin{lemma}[{cf.~\cite[Lemma 5.31]{MR1768535}}]\label{lemcad22}
Let $l_{0}$ be the maximal number with $100^{l_{0}}\leq C_{0}/\alpha$ and suppose that $C_{0}^{l_{0}/2} > C_{\lambda}^{\lceil \log_{2}A_{0} \rceil}$, where $C_{\lambda}$ is the doubling constant of the dominating function.
Let
\[
Q_{0}=\hat Q_{1} \supset Q_{1} = \hat Q_{2} \supset \dots
\]
be a nested family of cubes such that $Q_{1},Q_{2},\dots$ are non-doubling.
Then
\begin{equation}\label{eqdk88}
\Theta(Q_{k})\lesssim C_0^{-k l_{0}/2}\Theta(Q_{0}).
\end{equation}
\end{lemma}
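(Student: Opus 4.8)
The plan is to bound the numerator $\mu(\alpha B(Q_k))$ and the denominator $\lambda(z_{Q_k},\alpha r(Q_k))$ of $\Theta(Q_k)$ separately, comparing each with its counterpart at $Q_0$ in a single step. For the numerator: the hypothesis forces $l_0\ge1$ (since $C_\lambda^{\lceil\log_2 A_0\rceil}\ge1$ and $C_0>1$), hence $\alpha\le C_0/100$, so applying \eqref{eqdob23} to a non-doubling cell $Q_j$ with $c=\alpha,100\alpha,\dots,100^{l_0-1}\alpha$ and iterating gives $\mu(\alpha B(Q_j))\le C_0^{-l_0}\mu(100^{l_0}\alpha B(Q_j))$. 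Since $100^{l_0}\alpha\le C_0$ and a non-doubling $Q_j$ has $r(Q_j)=A_0^{-j}\le r(Q_{j-1})/A_0$ while $z_{Q_j}\in Q_j\subset Q_{j-1}\subset 28B(Q_{j-1})$, the ball $100^{l_0}\alpha B(Q_j)$ has radius at most $C_0 r(Q_j)<r(Q_{j-1})/5000$ (using $A_0>5000\,C_0$) and center within $28r(Q_{j-1})$ of $z_{Q_{j-1}}$; as $\alpha\ge200$ this yields $100^{l_0}\alpha B(Q_j)\subset\alpha B(Q_{j-1})$, hence $\mu(\alpha B(Q_j))\le C_0^{-l_0}\mu(\alpha B(Q_{j-1}))$, and chaining over $j=1,\dots,k$ gives $\mu(\alpha B(Q_k))\le C_0^{-kl_0}\mu(\alpha B(Q_0))$.

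For the denominator I would instead bound $\lambda(z_{Q_0},\alpha r(Q_0))$ from above in terms of $\lambda(z_{Q_k},\alpha r(Q_k))$. First shift the center from $z_{Q_0}$ to $z_{Q_k}$ at the fixed radius $\alpha r(Q_0)$: since $d(z_{Q_0},z_{Q_k})<28r(Q_0)<\alpha r(Q_0)$, property \eqref{eq:df-loc} costs a single factor $C_\lambda$. Then decrease the radius to $\alpha r(Q_k)$; because $r(Q_0)/r(Q_k)\le C_0 A_0^{k}$ (here $r(Q_k)=A_0^{-k}$ and $r(Q_0)\le C_0$) and $\lambda(x,2s)\le C_\lambda\lambda(x,s)$, this costs $C_\lambda^{\lceil\log_2(C_0 A_0^{k})\rceil}\le C_\lambda^{\lceil\log_2 C_0\rceil}\bigl(C_\lambda^{\lceil\log_2 A_0\rceil}\bigr)^{k}$. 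Combining the two estimates,
\[
\Theta(Q_k)\le C_\lambda^{1+\lceil\log_2 C_0\rceil}\Bigl(\frac{C_\lambda^{\lceil\log_2 A_0\rceil}}{C_0^{l_0}}\Bigr)^{k}\Theta(Q_0)\le C_\lambda^{1+\lceil\log_2 C_0\rceil}\,C_0^{-kl_0/2}\,\Theta(Q_0),
\]
the last step being exactly the assumption $C_\lambda^{\lceil\log_2 A_0\rceil}<C_0^{l_0/2}$, and $C_\lambda^{1+\lceil\log_2 C_0\rceil}$ is absorbed into the implied constant of \eqref{eqdk88}.

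The one delicate point — and the reason for comparing $Q_k$ with $Q_0$ directly rather than telescoping one scale at a time — is the center-shift factor $C_\lambda$ coming from \eqref{eq:df-loc}. In a scale-by-scale argument it is incurred at every step and accumulates to $C_\lambda^{k}$, after which the hypothesis $C_0^{l_0/2}>C_\lambda^{\lceil\log_2 A_0\rceil}$ is not quite enough to make the per-step factor $C_0^{-l_0}C_\lambda^{1+\lceil\log_2 A_0\rceil}$ dominated by $C_0^{-l_0/2}$. Doing the comparison in one jump confines this loss to a single harmless constant. Everything else — the iteration of \eqref{eqdob23}, the ball inclusions, and the doubling of $\lambda$ in the radius — is routine bookkeeping using the properties collected in Lemma~\ref{lemcubs} and the definition of the dominating function.
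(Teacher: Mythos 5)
Your proof is correct and takes the route the paper intends: the paper's own proof is only the one-line remark that the lemma follows from \eqref{eqdob23}, and your argument is exactly that idea carried out in detail --- iterating \eqref{eqdob23} to gain $C_0^{-l_0}$ per non-doubling generation in $\mu(\alpha B(Q_j))$, while the denominator loses at most $C_\lambda^{\lceil\log_2 A_0\rceil}$ per generation, which the hypothesis $C_0^{l_0/2}>C_\lambda^{\lceil\log_2 A_0\rceil}$ absorbs. Your decision to compare $Q_k$ with $Q_0$ in a single jump, so that the center-shift factor from \eqref{eq:df-loc} is paid only once, is a reasonable piece of constant bookkeeping rather than a genuinely different method.
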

\begin{proof}
This follows from \eqref{eqdob23}.
\end{proof}

\subsection{The cube selection procedure}
The main part of the proof of Theorem \ref{Lnh} is a recursive cube selection procedure.
Since we are using non-sharp truncations (i.e.\, the grand maximal function associated to a ball is applied to the restriction of the function $f$ to a larger ball), Lacey's stopping time argument \cite{arXiv:1501.05818} only works well for doubling cubes, but yields stopping cubes that are in general non-doubling.
When we arrive at a non-doubling stopping cube, we simply keep subdividing it into smaller cubes until we hit a doubling cube.
The contributions of the intermediate scales turn out to shrink exponentially, and we obtain the following result.

\begin{lemma}
\label{lem:recursion}
Let $Q_{0} \in \cD^{db}$ be a doubling cube and $f$ be an integrable function supported on $30B(Q_{0})$.
Then there exists a subset $\Omega \subset Q_{0}$, collections of pairwise disjoint cubes $\cC_{n}(Q_{0})\subset\cD$, $n=1,\dots$, contained in $\Omega$, and a collection of pairwise disjoint doubling cubes $\cF(Q_{0})\subset\cD^{db}$ contained in $\Omega$ with the following properties:
\begin{enumerate}
\item\label{lem:recursion:minor} $\mu(\Omega) \leq \frac12 \mu(Q_{0})$,
\item For every $P\in \cF$ and $Q\in \cC_{n}$ we have either $P\subset Q$ or $P\cap Q=\emptyset$,
\item Almost everywhere we have the estimate
\begin{multline}
\label{eq:recursion}
N_{Q_{0}} (\1_{30B(Q_{0})}f) \1_{Q_{0}}
\leq
\sum_{P\in\cF(Q_{0})} N_{P} (\1_{30B(P)}f) \1_{P}\\
+
C A(f,Q_{0}) \1_{Q_{0}}
+ C \sum_{n=1}^{\infty} 100^{-n} \sum_{Q\in\cC_{n}(Q_{0})} A(f,Q) \1_{Q}.
\end{multline}
\end{enumerate}
\end{lemma}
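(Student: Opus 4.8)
The plan is to run a Calderón--Zygmund-type stopping time argument inside $Q_0$, but with two kinds of stopping conditions and with an extra bookkeeping device to handle non-doubling stopping cubes. First I would set the stopping threshold relative to the average $A(f,Q_0)$: call a cube $Q\in\cD(Q_0)$, $Q\neq Q_0$, \emph{selected} if it is maximal (hence the selected cubes are pairwise disjoint) subject to violating at least one of
\begin{equation}
\label{eq:stop-cond}
A(f,Q) > C_1 A(f,Q_0)
\qquad\text{or}\qquad
\mu\bigl(\{x\in Q : N_{Q_0}(\1_{30B(Q_0)}f)(x) > C_2 A(f,Q_0)\}\bigr) > \tfrac12 \mu(Q),
\end{equation}
where $C_1,C_2$ are absolute constants chosen below. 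Let $\Omega$ be the union of the selected cubes. The second alternative in \eqref{eq:stop-cond}, together with the weak-$(1,1)$ bound for $N_{Q_0}$ established in \eqref{eq:N-tsharp} and the disjointness of the selected cubes, gives $\mu(\Omega)\le \frac12\mu(Q_0)$ once $C_2$ is large enough depending on the weak-$(1,1)$ constant; the first alternative only makes $\Omega$ smaller, so item \eqref{lem:recursion:minor} holds. Outside $\Omega$ we have $N_{Q_0}(\1_{30B(Q_0)}f)\le C_2 A(f,Q_0)$ $\mu$-a.e., which will produce the term $C A(f,Q_0)\1_{Q_0}$ in \eqref{eq:recursion}.

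The heart of the matter is what to do on a selected cube $Q$. If $Q$ is doubling, put it into $\cF(Q_0)$; by maximality its parent $\hat Q$ was not selected, so $A(f,\hat Q)\le C_1 A(f,Q_0)$, and applying \eqref{eq:2scales} once (passing from $\hat Q$ to $Q$) together with $\Theta\le 1$ and \eqref{in} shows that $N_{\hat Q}(\1_{30B(\hat Q)}f)\le C A(f,Q_0) + N_Q(\1_{30B(Q)}f)$ on $Q$; this yields the summand $N_P(\1_{30B(P)}f)\1_P$. If instead $Q$ is non-doubling, I would \emph{not} stop there: I iterate, descending through the (automatically non-doubling, by Lemma~\ref{lemcubs}, since a non-doubling cube has $r(Q)=A_0^{-k}$) chain of parents and repeatedly applying \eqref{eq:2scales}, until I reach the first doubling descendant; that doubling cube is then placed in $\cF(Q_0)$ or is itself re-examined against \eqref{eq:stop-cond}. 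Along such a chain $Q=Q_j\supset Q_{j+1}\supset\cdots$ of non-doubling cubes, Lemma~\ref{lemcad22} gives $\Theta(Q_{j+m})\lesssim C_0^{-m l_0/2}\Theta(Q_j)$, so summing the contributions $C\Theta(Q_{j+m})A(f,Q_{j+m})$ from \eqref{eq:2scales} over $m$ gives a geometric series; choosing $A_0$ (hence $l_0$) large enough that $C_0^{-l_0/2}\le 100^{-1}$, and noting $A(f,Q_{j+m})\le C_1 A(f,Q_0)$ before we cross a stopping cube, this sums to $\lesssim A(f,Q_0)$ and is absorbed into the $A(f,Q_0)\1_{Q_0}$ term. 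The cubes encountered at the "restart'' stage — namely each selected cube together with its doubling-subdivision descendants that we examine — are exactly the cubes that should be collected into the families $\cC_n(Q_0)$, graded by the power of $100$ coming from the geometric decay: $Q\in\cC_n(Q_0)$ if it is reached at depth $n$ in the non-doubling descent from a selected cube, so that its contribution carries the factor $100^{-n}$; pairwise disjointness within each $\cC_n$ and the nesting property (2) relative to $\cF$ are immediate from the construction via the partition structure of $\cD$.

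The main obstacle I anticipate is organizing the double recursion cleanly: one recursion is the classical one producing $\cF(Q_0)$ (which will later be iterated over $n$ in the proof of Theorem~\ref{Lnh}), and a second, internal, finite "subdivision until doubling'' recursion producing the intermediate cubes $\cC_n(Q_0)$ with their exponential weights. Care is needed to ensure the weights are consistent — that the factor attached to a cube in $\cC_n$ is genuinely $100^{-n}$ and not merely $\lesssim 100^{-n}$ with an $n$-dependent implied constant — which is why one fixes $A_0$ large relative to $C_0$ and $\alpha$ at the outset so that $C_0^{-l_0/2}\le 100^{-1}$, absorbing all constants from \eqref{eq:2scales} and Lemma~\ref{lemcad22} into the base of the geometric series. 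One also has to check that the maximality/disjointness of selected cubes is not spoiled by the descent step: since descent only replaces a cube by its descendants and we never re-select, the families remain disjoint, and \eqref{in} guarantees the localizing balls $30B(\cdot)$ nest correctly at each step. Finally, a minor point is that $N_{Q_0}(\1_{30B(Q_0)}f)\1_{Q_0}$ restricted to $Q_0\setminus\Omega$ must be pointwise dominated by $C_2 A(f,Q_0)$, which follows from the definition of $\Omega$ via the second alternative in \eqref{eq:stop-cond} together with a Lebesgue-differentiation argument along the cells $\cD_k$ (valid since the cells have small boundaries \eqref{eqsmb2} and shrink to points), so that the exceptional set where $N_{Q_0}(\1_{30B(Q_0)}f)$ exceeds the threshold but which is not covered by selected cubes is $\mu$-null.
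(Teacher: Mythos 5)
Your overall strategy coincides with the paper's (a Lacey--Lerner stopping time driven by $N_{Q_{0}}$ and the averages $A(f,\cdot)$, passage to the localized operators via \eqref{eq:2scales}, and continued subdivision of non-doubling stopping cubes using the decay of $\Theta$ from Lemma~\ref{lemcad22}), but two steps are wrong as written. First, in the proof of item \eqref{lem:recursion:minor} you assert that the stopping condition $A(f,Q)>C_{1}A(f,Q_{0})$ ``only makes $\Omega$ smaller''. It does the opposite: every cube selected because of a large average is \emph{added} to $\Omega$, and its measure must be controlled as well. This is precisely why the paper includes the maximal function $M_{\mu}$ of \eqref{eq:Mmu} in the definition of the bad set: the cubes with $A(f,Q)>C_{1}A(f,Q_{0})$ lie in $\{M_{\mu}f>C_{1}A(f,Q_{0})\}$, whose measure is controlled by the Vitali weak-$(1,1)$ bound for $M_{\mu}$ together with the doubling of $Q_{0}$, and one chooses $C_{1}$ large. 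Without this ingredient your proof of $\mu(\Omega)\le\frac12\mu(Q_{0})$ is incomplete.

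Second, your treatment of a non-doubling selected cube is internally inconsistent. The descent takes place \emph{inside} the selected cube, i.e.\ after the stopping time has been crossed, so the averages $A(f,Q_{j+m})$ of the descendants are not bounded by $C_{1}A(f,Q_{0})$, and the claim that their contributions ``sum to $\lesssim A(f,Q_{0})$ and are absorbed into the $A(f,Q_{0})\1_{Q_{0}}$ term'' is false. No such absorption is needed: as in the paper (and as your own last sentence of that paragraph records), the cube at depth $m$ of the descent contributes $C\,\Theta(Q_{j+m})A(f,Q_{j+m})\1_{Q_{j+m}}\lesssim 100^{-m}A(f,Q_{j+m})\1_{Q_{j+m}}$, which is exactly what the right-hand side of \eqref{eq:recursion} retains; you should delete the absorption claim and keep only this bookkeeping, with only the non-doubling descendants placed in $\cC_{n}$ and the doubling ones sent to $\cF$ (the ``re-examination'' against the stopping condition belongs to the iteration in the proof of Theorem~\ref{Lnh}, not to this lemma). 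Two smaller slips: the rate $100^{-n}$ is arranged by taking $C_{0}$ large --- the exponent $l_{0}$ in Lemma~\ref{lemcad22} is determined by $C_{0}$ and $\alpha$, not by $A_{0}$, and enlarging $A_{0}$ works against the hypothesis $C_{0}^{l_{0}/2}>C_{\lambda}^{\lceil\log_{2}A_{0}\rceil}$; and you do not address the almost-everywhere termination of the non-doubling chains, for which the paper invokes Lemma~5.28 of David--Mattila. Your density-type stopping condition combined with martingale differentiation along the cells is a legitimate variant of the paper's simpler choice of taking $\Omega$ to be the super-level set of $N_{Q_{0}}$ and $M_{\mu}$ and decomposing it into maximal cells.
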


\begin{proof}
Note that the maximal function
\begin{equation}
\label{eq:Mmu}
M_{\mu}f(x) := \sup_{x\in Q} A(f,Q)
\end{equation}
has weak type $(1,1)$ by the Vitali covering lemma.
Let $K>0$ be so large that the weak type $(1,1)$ inequalities for $N_{Q_{0}}$ and $M_{\mu}$ will imply that the bad set
\[
\Omega:=\Set{x\in Q_0:  N_{Q_0}f(x) > K A(f, Q_0)}\cup \Set{x\in Q_0:  M_\mu f(x) > K A(f, Q_0)}
\]
has measure bounded by $\frac12 \mu(Q_0)$ (this is the only step in which we use the doubling property of $Q_{0}$).
The set $\Omega$ is the disjoint union of the maximal cells contained in it.
Let us call this family of cells $\cC_{0}(Q_0)$.
They are not necessarily doubling.

By definition of $\Omega$ we have
\begin{equation}
\label{vne}
x\in Q_0 \setminus \cup_{Q\in \cC_{0}(Q_0)} Q
\implies
N_{Q_0}f(x) \le K A(f, Q_0)\,.
\end{equation}

Now we want the estimate of $N_{Q_0}f(x)$ for $x$ in each $Q\in \cC_{0}(Q_0)$.
By maximality of $Q$ we know that
\[
\sup_{x\in Q} \abs{F(x, \tilde Q)}
\leq
\sup_{y\in \tilde Q} \abs{F(y, \tilde Q)}
\leq
K A(f, Q_0)
\]
for every $\tilde Q$ with $Q \subsetneq \tilde Q \subseteq Q_{0}$.
Hence
\[
N_{Q_{0}}f(x) \leq K A(f, Q_0) + N_{\hat Q}(f1_{30 B(\hat Q)})(x),
\]
where $\hat Q$ is the parent of $Q$ in $\cD$.
Applying \eqref{eq:2scales} on the right-hand side, using maximality of $Q$ to estimate $A(f, \hat Q) \leq K A(f, Q_0)$, and using the fact that $\Theta(\hat Q)\leq 1$ we obtain
\[
N_{Q_{0}}f(x) \leq C A(f, Q_0) + N_{Q}(f1_{30 B(Q)})(x)
\]
with some larger value of $C$.

The families $\cC_{n}$ are now constructed inductively as follows.
Put all doubling cubes in $\cC_{0}$ into $\cF$ and let $\cC_{1}$ consist of the remaining non-doubling cubes.
Suppose that $\cC_{n}$, $n\geq 1$, has already been constructed.
Then we put every $Q\in\cD$ such that $\hat Q\in\cC_{n}$ into $\cF$ if it is doubling and into $\cC_{n+1}$ if it is non-doubling.

In view of \cite[Lemma 5.28]{MR1768535} the chain
\[
Q_{1} \supset Q_{2} \supset \dotsb \supset Q_{N} \ni x,
\quad
Q_{n} \in \cC_{n},
\]
terminates after finitely many term for almost every $x\in\Omega$.
If $C_{0}$ is sufficiently large, then Lemma~\ref{lemcad22} yields $\Theta(Q_{n}) \lesssim 100^{-n}$, and we obtain the claim~\eqref{eq:recursion} summing the estimate \eqref{eq:2scales} over the cubes $Q_{1},\dotsc,Q_{N}$.
\end{proof}

\subsection{Proof of Theorem~\ref{Lnh}}
We begin by finding a ball $B\subset X$ that contains $X'$ and satisfies the doubling condition $\mu(100 B) \leq C_{0} \mu(B)$.
Rescaling the metric by a constant we may assume that this ball has radius $C_{0}$.
The construction in \cite[Theorem 3.2]{MR1768535} now yields a system of David--Mattila cells such that $X'$ is contained in some cell $Q_{0}$.

Recursive application of Lemma~\ref{lem:recursion} now yields an estimate of the required form for $N_{Q_{0}}f$ as follows.
We initialize the collection of doubling cubes $\cF_{0}^{0}:=\Set{Q_{0}}$.
Given a collection of doubling cubes $\cF_{0}^{k}$, an application of Lemma~\ref{lem:recursion} to each cube $P\in\cF_{0}^{k}$ yields collections of non-doubling cubes $\cC_n^{P}, n=1,\dots$ and a collection of doubling cubes $\cF^{P}$.
We define
\[
\cF_n^{k+1} := \cup_{P \in \cF_{0}^{k}} \cC_n^{P},\ n=1,\dots,
\text{ and }
\cF_{0}^{k+1} := \cup_{P \in \cF^{k}} \cF^{P}.
\]
It follows from part \eqref{lem:recursion:minor} of Lemma~\ref{lem:recursion} that for every $k\geq 0$, every $n\geq 0$, and every $R\in \cF_n^k$ we have
\[
\sum_{Q \in \cF_n^{k+1} : Q\subset R} \mu(Q) \le \frac12 \mu(R).
\]
We denote $\cF_n:=\cup_k \cF_n^k$, and these are precisely the sparse families we need in Theorem~\ref{Lnh}.

In view of \eqref{eq:N-tsharp} it remains to estimate the maximal function $M_{\lambda}$ by a sparse operator.
To this end note that every ball $B(x,R)$ is contained in $30 B(Q)$ for some cell $Q$ with $r(Q)\lesssim_{A_{0}} R$.
It follows that
\[
M_{\lambda}f(x)
\lesssim
\sup_{x\in Q\in\cD} \lambda(x,r(Q))^{-1} \int_{30 B(Q)} \abs{f} \dif\mu
\lesssim
\sup_{x\in Q\in\cD} \Theta(Q) A(f,Q).
\]
Lemma~\ref{lem:recursion} continues to hold with $N$ replaced by the localized maximal operator $\tilde N_{Q_{0}}f(x) := \sup_{x\in Q\in\cD(Q_{0})} \Theta(Q) A(f,Q)$ with identical proof.
This provides the required sparse domination for $M_{\lambda}$.

\section{Consequences for weighted estimates}
\label{cons}

In view of Theorem~\ref{Lnh}, in order to prove Corollary~\ref{A2} it suffices to obtain the corresponding estimate for a sparse operator
\[
Tf = \sum_{Q\in\cS} A(f,Q) \1_{Q},
\]
where $\cS$ is a sparse collection of David--Mattila cells.
We repeat the proofs in \cite{MR3000426} and \cite{MR3484688}.
By duality the norm of $T$ as an operator on $L^{p}(w)$ is equal to the best constant in the inequality
\[
\abs{\int T(f\sigma) gw} \leq K \norm{f}_{L^{p}(\sigma\dif\mu)} \norm{g}_{L^{p'}(w\dif\mu)}.
\]
Let $E(Q)\subset Q\in\cS$ be disjoint subsets such that $\mu(E(Q))>\frac12 \mu(Q)$.
Then
\begin{align*}
\abs[\big]{\int T(f\sigma) gw \dif\mu}
&\leq
\sum_{Q} A(f\sigma,Q) \int_{Q} \abs{gw} \dif\mu\\
&=
\sum_{Q} \frac{\sigma(200 B(Q)) w(Q)}{\mu(\alpha B(Q))} \Big( \frac{1}{\sigma(200 B(Q))} \int_{30 B(Q)} \abs{f\sigma} \dif\mu \Big) \Big( \frac{1}{w(Q)} \int_{Q} \abs{gw} \dif\mu \Big) \\
&\lesssim
  \Big( \sup_{Q} \frac{\sigma(200 B(Q)) w(Q)}{\mu(\alpha B(Q)) \sigma(E(Q))^{1/p} w(E(Q))^{1/p'}} \Big)\\
&\cdot  \Big( \sum_{Q} \Big( \frac{1}{\sigma(200 B(Q))} \int_{30 B(Q)} \abs{f\sigma} \dif\mu \Big)^{p} \sigma(E(Q)) \Big)^{1/p}\\
&\cdot  \Big( \sum_{Q} \Big( \frac{1}{w(Q)} \int_{Q} \abs{gw} \dif\mu \Big)^{p'} w(E(Q))\Big)^{1/p'}.
\end{align*}
The last two terms are estimated by the $M_{\sigma \dif\mu}$ maximal function of $f$ (defined in \eqref{eq:Mmu}) and the martingale maximal function $M_{w \dif\mu}^{\cD}$ with measure $w\dif\mu$ of $g$, respectively.

In order to estimate the supremum over $Q$ in the first term by the right-hand side of \eqref{eq:dep-on-weight} note that $w^{1/p'}\sigma^{1/p}\equiv \1$, so that $\mu(Q) \lesssim \mu(E(Q)) \leq w(E(Q))^{1/p}\sigma(E(Q))^{1/p'}$ by H\"older's inequality.
Taking this inequality to power $p^{*}-1$ and using it in the denominator we obtain the claim.

\section{The  $A_2$ conjecture for arbitrary  non-homogeneous \cz operators in dimension $1$}
\label{dim1}

It is nice to notice that the $A_2$ characteristic in our main result (Corollary~\ref{A2}, where the supremum is taken over David--Mattila cells) becomes almost the usual $A_2$ characteristic if we consider \cz operators with respect to an arbitrary measure in $\bR^1$.

To notice that let us observe first of all that we have never used the ``small boundary property'' of David--Mattila cells from Section \ref{DM} in the proof.
We will now indicate how the construction in \cite[Section 3]{MR1768535} can be modified in the case of a non-atomic measure $\mu$ with compact support on the real line in such a way that
\begin{enumerate}
\item the resulting cells will be intervals and
\item the cells will satisfy all conditions of Section \ref{DM} except may be the small boundary requirement.
\end{enumerate}
The restriction to non-atomic measures does not lose generality in application to $A_2$ questions.

We consider the balls (in our case intervals, as we are on $\bR^1$) $B(x)$ built on pages 145--146 of \cite[Theorem 3.2]{MR1768535}.
Then we find the discrete subset $I^0$ of points $x$ such that $5B(x)$ are disjoint and $25B(x)$ cover the support $E=\supp\mu$.
Now we swerve a little bit from the path of \cite[Theorem 3.2]{MR1768535}, and we construct balls (intervals) $B_4^{0}$ as follows.
We allow each $5B(x)$, $x\in I^0$, to extend beyond its end-points to the left and to the right with the speed proportional to the size of $B(x)$.
We stop the extension when the earliest of the following happen: 1) the extension reaches an end-point of $25B(x)$, 2) it meets another extension.
Notice that the extensions beyond the left and the right end-point of a given~$5B(x)$ can stop for different reasons.

Notice that we did not use the notions of $B_1(x), B_2(x), B_3(x)$ of \cite[Theorem 3.2]{MR1768535}, but rather immediately built disjoint $B_4(x)= B_4^{0}(x), x\in I=I^{0}$.
As in Lemma 3.33 of \cite[Theorem 3.2]{MR1768535}, we can claim that $B_4(x), x\in I$ are disjoint and cover $E$.

Then we apply the preceding construction to each scale $A_0^{-k}$.
We get $B_4^k(x), x\in I_k$, which is again a disjoint covering of $E$.
Next we wish to replace $B_4^k$'s by a finer version, by taking unions of $B_4^m(y)$, $m>k$.

We need the supervising relation called $h$ in  \cite[Theorem 3.2]{MR1768535}.
For each $k\ge 1$ the point $y\in I^k$ will be supervised by $x\in I^{k-1}$ if and only if $y\in B_4^{k-1}(x)$.

Notice that supervising relationship is {\it monotone}, meaning that if $x_1<x_2, x_i\in I^{k-1}, i=1,2$, and $y_i\in I^k$ is a supervisee of $x_i$, $i=1,2$ correspondingly, then $y_1<y_2$.

Introduce (as in the paper of David--Mattila) for $x\in I^k$ the set
\[
D^k_\ell := \cup_{y\in I^{k+\ell}, \, h^\ell(y)=x} B_4^{k+\ell}(y)\,.
\]

By the abovementioned monotonicity each $D^k_\ell$ is an interval.
And as in the paper of David--Mattila these are disjoint sets covering $E$.
The rest of reasoning goes verbatim as in \cite[Theorem 3.2]{MR1768535}.

Therefore we have obtained the following $A_2$-linear estimate for \cz operators with respect to upper doubling measures on $\bR$.
\begin{cor}
\label{A21}
Let $(\bR^1,d,\mu)$ be an upper doubling, geometrically doubling metric measure space, where $d$ is just Euclidean metric,  and let  $T$ be a \cz operator on $\bR^1$, that is, a bounded operator in $L^2(\mu)$ whose kernel satisfies Definition \ref{czONud}.
Then  we have (with $\sigma= w^{-1}$)
\begin{equation}
\label{eq:dep-on-weight}
\norm{T}_{L^{2}(w)\to L^{2}(w)}
\lesssim
\min [\sup_{I\subset \bR^1} \langle w\rangle_{30I}\langle \sigma\rangle_I,\,\sup_{I\subset \bR^1} \langle \sigma\rangle_{30I}\langle w\rangle_I]\,,
\end{equation}
where the supremum is taken over the collection of intervals $I\subset\bR$.
\end{cor}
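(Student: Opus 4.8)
The plan is to specialise Corollary~\ref{A2} to $p=2$, feeding it the interval lattice constructed in Section~\ref{dim1}, and to obtain the second entry of the minimum by a duality argument. First I would record the reductions. As remarked in Section~\ref{dim1}, for $A_{2}$ estimates we may assume that $\mu$ is non-atomic and compactly supported, and for such $\mu$ the construction of that section produces a David--Mattila-type lattice $\cD$ all of whose cells are genuine intervals, satisfying every property of Lemma~\ref{lemcubs} used in the proofs of Theorem~\ref{Lnh} and Corollary~\ref{A2}---the sole exception being the small-boundary estimate, which is never invoked. Hence Theorem~\ref{Lnh} and Corollary~\ref{A2} hold verbatim with $\cD$ this interval lattice. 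Since $\abs{Tf}\le T^{\sharp}f$ $\mu$-a.e., it suffices to bound $T^{\sharp}$.

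Now take $p=2$, so that $p^{*}=2$ and $(p-2)_{+}=(p'-2)_{+}=0$, and the right-hand side of the bound in Corollary~\ref{A2} collapses to $\sup_{Q\in\cD}\sigma(200B(Q))\,w(Q)/(\mu(\alpha B(Q))\,\mu(Q))$. Two observations turn this into the asserted shape. First, in the proof of Corollary~\ref{A2} the dilation factor $200$ serves only to normalise the $\sigma\dif\mu$-maximal function, and may be lowered to $30$: the operator $f\mapsto\sup_{x\in Q}\sigma(30B(Q))^{-1}\int_{30B(Q)}\abs{f}\,\sigma\dif\mu$ is still bounded on $L^{2}(\sigma\dif\mu)$, being pointwise dominated by the $\sigma\dif\mu$-Hardy--Littlewood maximal function. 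Second, when $Q$ is an interval the inclusions $B(Q)\cap W\subset Q\subset 28B(Q)$, together with the fact that the construction of Section~\ref{dim1} yields intervals of length comparable to $r(Q)$, imply $30B(Q)\subset 30Q$ and---for a suitably large fixed choice of the parameter $\alpha$---also $30Q\subset\alpha B(Q)$. Combining these,
\[
\frac{\sigma(30B(Q))\,w(Q)}{\mu(\alpha B(Q))\,\mu(Q)}\le\frac{\sigma(30Q)}{\mu(30Q)}\cdot\frac{w(Q)}{\mu(Q)}=\langle\sigma\rangle_{30Q}\,\langle w\rangle_{Q},
\]
so that, taking the supremum first over $Q\in\cD$ and then over all intervals, $\norm{T}_{L^{2}(w)\to L^{2}(w)}\lesssim\sup_{I}\langle\sigma\rangle_{30I}\langle w\rangle_{I}$, which is the second entry of the minimum.

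The first entry follows by passing to the $\mu$-adjoint $T^{*}$, whose kernel is $K^{*}(x,y)=K(y,x)$. Using \eqref{eq:df-loc} one has $\lambda(y,d(x,y))\sim\lambda(x,d(x,y))$, so $K^{*}$ again satisfies \eqref{eq:czk-size}; and since the left-hand side of \eqref{eq:czk-smoothness} is symmetric in the two kernels, $K^{*}$ also satisfies \eqref{eq:czk-smoothness}. Thus $T^{*}$ is a \cz operator on $(\bR,d,\mu)$. Because $\sigma=w^{-1}$, the usual duality for one-weight norm inequalities gives $\norm{T}_{L^{2}(w)\to L^{2}(w)}=\norm{T^{*}}_{L^{2}(\sigma)\to L^{2}(\sigma)}$, and applying the estimate just proved to $T^{*}$ with weight $\sigma$ (whose dual weight is $\sigma^{-1}=w$) yields $\norm{T^{*}}_{L^{2}(\sigma)\to L^{2}(\sigma)}\lesssim\sup_{I}\langle w\rangle_{30I}\langle\sigma\rangle_{I}$. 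Taking the minimum of the two bounds finishes the proof.

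The one step calling for genuine care is the geometric bookkeeping in the second paragraph: one must check that the intervals produced by the construction of Section~\ref{dim1} lie inside the dilates of their centred hulls $B(Q)$ with proportions tight enough that the specific constant $30$---rather than merely some larger absolute constant---survives. Everything else is already contained in Theorem~\ref{Lnh}, Corollary~\ref{A2}, and the construction of Section~\ref{dim1}, or is routine duality.
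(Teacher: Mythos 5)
Your argument is, in substance, the paper's own: Section~\ref{dim1} consists precisely of (i) the remark that the small-boundary property of the cells is never used, (ii) the one-dimensional modification of the David--Mattila construction producing cells that are genuine intervals, and (iii) the invocation of Corollary~\ref{A2} with $p=2$; the paper supplies no further detail, and the details you fill in are the right ones. Your duality step (passing to the $\mu$-adjoint $T^{*}$, which is again a \cz operator by \eqref{eq:df-loc} and the symmetry of \eqref{eq:czk-smoothness}) is the natural way to produce the minimum of the two expressions. The geometric bookkeeping you flag as the delicate point does close up, but not quite from the premises you cite: what saves it is that the one-dimensional construction gives $B(Q)\subset Q\subset 25B(Q)$ (the paper's closing theorem states the cells come with \emph{subintervals} $B(I)\subset I\subset 25B(I)$), and $B(Q)\subset Q$ forces the length of $Q$ to be at least $2r(Q)$, which is exactly what yields $30B(Q)\subset 30Q$, while $Q\subset 25B(Q)$ gives $30Q\subset\alpha B(Q)$ for a fixed large admissible $\alpha$ (harmless, since the implied constant may depend on $\alpha$). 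Likewise, replacing the normalization $\sigma(200B(Q))$ by $\sigma(30B(Q))$ is legitimate, but only because in dimension one the uncentered maximal operator with respect to the arbitrary measure $\sigma\,\dif\mu$ is bounded on $L^{2}(\sigma\,\dif\mu)$; this covering fact is where $\bR^{1}$ is genuinely used, and it is the reason the paper keeps the factor $200$ (needed for the Vitali argument behind \eqref{eq:Mmu}) in the general metric setting.

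The one step that is false as written is the reduction ``$\abs{Tf}\leq T^{\sharp}f$ $\mu$-a.e.'': this holds only off $\supp f$ (the identity operator, whose kernel is $0$, is a counterexample on $\supp f$). Theorem~\ref{Lnh} and Corollary~\ref{A2} control $T^{\sharp}$, whereas Corollary~\ref{A21} is stated for $T$, so one needs the standard supplementary argument: the truncations $T_{\epsilon}$ are uniformly bounded on $L^{2}(\mu)$, any weak operator limit of them has the same kernel as $T$, two $L^{2}(\mu)$-bounded operators with the same kernel differ by multiplication by an $L^{\infty}(\mu)$ function, and such a multiplication is bounded on every $L^{2}(w\,\dif\mu)$ with norm absorbed by the right-hand side, which is $\gtrsim 1$. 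The paper is equally silent on this point, so it is a shared and repairable gap rather than a defect of your route; aside from it, your proof is the paper's proof with the omitted verifications supplied.
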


Moreover, the domination result also holds.

\begin{theorem}
Let $(\bR^1,d,\mu)$ be an upper doubling, geometrically doubling metric measure space, where $d$ is just Euclidean metric, and $\alpha\geq 200$.
Then for every $L^{2}(\mu)$ bounded \cz operator $T$ (see Definition \ref{czONud}) on $\bR^1$, every bounded set $X'\subset \bR^1$, and every integrable function $f$ supported on $X'$ we can find sparse families $\cF_{n}$, $n=0,1,\dots$, of usual intervals $I$ and their subintervals $B(I)$ such that, $I\subset 25 B(I)$, and such that the estimate
\[
T^{\sharp}f
\lesssim_{T,\alpha}
\sum_{n=0}^{\infty} 100^{-n} \sum_{I\in \cF_{n}} \frac{\int_{30B(I)} \abs{f} \dif\mu}{\mu(\alpha B(I))} \cdot \1_I
\]
holds pointwise $\mu$-almost everywhere on $X'$.
\end{theorem}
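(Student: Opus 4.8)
The plan is to rerun the proof of Theorem~\ref{Lnh} \emph{verbatim}, replacing the David--Mattila cells by the interval-valued cells constructed above. As already noted, the small boundary property~\eqref{eqsmb2} is the only property of David--Mattila cells not shared by the modified construction on $\bR^{1}$, and it is never used anywhere in Section~3. What the argument actually needs is: (i) the partition/nesting structure of the families $\cD_{k}$; (ii) for each cell $I\in\cD_{k}$ an associated interval $B(I)=B(z_{I},r(I))$ with $A_{0}^{-k}\leq r(I)\leq C_{0}A_{0}^{-k}$, with $W\cap B(I)\subset I\subset W\cap 25B(I)$, and with the intervals $5B(I)$, $I\in\cD_{k}$, pairwise disjoint; and (iii) the doubling dichotomy, i.e.\ the distinction between $\cD_{k}^{db}$ and non-doubling cells satisfying~\eqref{eqdob23}. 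All three are furnished by the construction sketched above.

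Granting this, one proceeds exactly as in Section~3. First one would check that the localized grand maximal truncated operator $N_{Q_{0}}$ has weak type $(1,1)$ and that $\abs{N_{Q_{0}}f-T^{\sharp}f}\lesssim M_{\lambda}f$ on $Q_{0}$ for $f$ supported on $30B(Q_{0})$; this uses only the \cz kernel estimates~\eqref{eq:czk-size}--\eqref{eq:czk-smoothness} together with the inclusions $I\subset 25B(I)$ and their dilates, so nothing changes. Next, the consecutive-scales inequality~\eqref{eq:2scales} and the exponential-decay Lemma~\ref{lemcad22} carry over unchanged: the former needs only $30B(I)\subset 30B(\hat I)$, which holds once $A_{0}$ is large (the radius bounds being identical to those of Lemma~\ref{lemcubs}), plus the kernel size bound~\eqref{eq:czk-size}; the latter uses only~\eqref{eqdob23}.

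The heart of the matter is the stopping-time Lemma~\ref{lem:recursion}. Its proof transcribes word for word, with one point to verify: a nested chain $I_{1}\supset I_{2}\supset\dotsb$ of non-doubling intervals, each belonging to the successive collections $\cC_{n}$, must terminate after finitely many steps for $\mu$-almost every point. In Section~3 this was supplied by \cite[Lemma~5.28]{MR1768535}; since the intervals $D^{k}_{\ell}$ of the modified construction are produced by the same supervising-relation mechanism as the original David--Mattila cells, the same reasoning applies and yields the analogous statement here. With Lemma~\ref{lem:recursion} available, the recursive scheme from the proof of Theorem~\ref{Lnh} — initialize $\cF_{0}^{0}=\{Q_{0}\}$, iterate, and set $\cF_{n}=\cup_{k}\cF_{n}^{k}$ — produces the desired sparse families of intervals, and the sparse domination of $M_{\lambda}$ follows as before, using that every interval $B(x,R)$ is contained in $30B(I)$ for some cell $I\ni x$ with $r(I)\lesssim_{A_{0}}R$. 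Combining this with~\eqref{eq:N-tsharp} gives the claimed pointwise bound on $X'$.

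I expect the only genuine work to lie in point~(iii) together with the chain-termination statement: one must confirm that the interval construction on $\bR^{1}$ really does produce a legitimate tree of intervals with the correct radius comparabilities at every scale and with the David--Mattila doubling/non-doubling alternative intact, so that both Lemma~\ref{lemcad22} and the a.e.\ finiteness of non-doubling chains remain at one's disposal. Everything else is a transcription of Section~3, the \cz kernel estimates and the geometric-doubling Vitali covering lemma being insensitive to whether the cells are genuine intervals or abstract cells.
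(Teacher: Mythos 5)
Your proposal is correct and coincides with the paper's own argument: the paper proves this theorem precisely by observing that the small boundary property \eqref{eqsmb2} was never used in Section~3, so the proof of Theorem~\ref{Lnh} (weak $(1,1)$ of $N_{Q_0}$, the consecutive-scales estimate, Lemma~\ref{lemcad22}, the stopping-time Lemma~\ref{lem:recursion} with a.e.\ termination of non-doubling chains, and the recursion) transcribes verbatim to the interval cells built in Section~\ref{dim1}. Your itemization of exactly which properties of the cells are needed (nesting, the balls $B(I)$ with the stated radius bounds and disjointness of $5B(I)$, and the doubling/non-doubling dichotomy \eqref{eqdob23}) matches what the paper implicitly relies on.
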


\printbibliography
\end{document}

%%% Local Variables:
%%% mode: latex
%%% TeX-master: t
%%% End: